\definecolor{light-gray}{gray}{0.50}
\newtheorem{theorem}{Theorem}[section]
\newtheorem{definition}[theorem]{Definition}
\newtheorem{remark}[theorem]{Remark}
\newtheorem{example}[theorem]{Example}
\newtheorem{proposition}[theorem]{Proposition}
\numberwithin{equation}{section}
 \DeclareMathOperator{\dom}{dom}
 \DeclareMathOperator{\Ext}{Ext}
\DeclareMathOperator{\loc}{loc}
\newcommand\gH{{\mathfrak{H}}}
\newcommand{\gG}{{\Gamma}}
\newcommand\cH{{\mathcal{H}}}
\newcommand\cN{{\mathfrak{N}}}
\newcommand\I{{\rm{i}}}
\def\Ext{{\rm Ext}}
\def\wt#1{{{\widetilde #1} }}
\newcommand{\gotH}{{\mathfrak H}}
\newcommand{\adots}{\makebox[0.9ex][l]{\raisebox{-0.2ex}{.}} \raisebox{0.4ex}{.}
\makebox[0.9ex][r]{\raisebox{1.1ex}{.}}}
\title{\bf Krein extension of a differential operator of even order}
\author{\bf Yaroslav I. Granovskyi, Leonid L. Oridoroga}
\date{}
\begin{document}


\maketitle


{\bf Abstract.}
We describe the Krein extension of minimal operator associated with the expression $\mathcal{A}:=(-1)^n\frac{d^{2n}}{dx^{2n}}$ on a finite interval $(a,b)$ in terms of boundary conditions. All non-negative extensions of the operator $A$ as well as extensions with a finite number of negative squares are described.

{\bf Keywords.}
Non-negative extension, Friedrichs' extension, Krein's extension,
boundary triplet, Weyl function.

\section{Introduction}

Let $A$ be a semi-bounded symmetric operator in a separable Hilbert space $\mathfrak{H}.$
It is well known that the operator $A$ admits self-adjoint extensions preserving the lower bound
(see~\cite[Ch. VIII]{AkhGl} and~\cite[I]{Kre47}).
According to the classical Krein's result ~\cite[I]{Kre47}, in the set $\Ext_A(0,\infty)$ of all non-negative self-adjoint extensions of the operator $A$, there exist two "extreme" extensions $\widehat{A}_F$ and $\widehat{A}_K$ uniquely determined by the following inequalities:
\begin{equation}\label{muM}
\left(\widehat{A}_F+x\right)^{-1}\leq\left(\widetilde{A}+x\right)^{-1}
\leq\left(\widehat{A}_K+x\right)^{-1}, \quad x\in(0, \infty), \quad \widetilde{A}\in\Ext_{A}(0,\infty).
\end{equation}
The extension $\widehat{A}_F$ is called Friedrichs' (or a hard), and the extension $\widehat{A}_K$ is called Krein's (or a soft), see~\cite[I]{Kre47}. In the case of positively definite operator $A>\varepsilon I>0$,
M.G. Krein showed that
\begin{equation}\label{Kr_dom}
\widehat{A}_K=A^*\upharpoonright\left(\dom A\dotplus\ker A^*\right).
\end{equation}
(see \cite[I]{Kre47}).

In the case of non-negative operator $A\geq 0$, the extensions $\widehat{A}_F$ and $\widehat{A}_K$ were first described in~\cite{DerMal91} in terms of abstract boundary conditions. Namely, it was shown that
\begin{equation}\label{DerMal}
\dom \widehat{A}_K=\{f\in\dom A^*:\Gamma_1f=M(0)\Gamma_0f\}, \quad \dom \widehat{A}_F=\{f\in\dom A^*:\Gamma_1f=M(-\infty)\Gamma_0f\},
\end{equation}
where $M(0)=M(0-)$ is a limit value of the Weyl function at zero, and $M(-\infty)$ is a limit value of the Weyl function at infinity (see Definition~\ref{def_Weylfunc}).

Description of the Friedrichs extension independent of~\eqref{DerMal} is known in many cases.
For instance, M.G. Krein showed that for ordinary differential operators on a finite interval
extension $\widehat{A}_F$ is generated by the Dirichlet problem~(\cite[II]{Kre47}).

H. Kalf in \cite{Kalf} investigated the general three-term Sturm-Liouville differential expression
\begin{equation}\label{Kalf-1}
\tau u=\frac{1}{k}\left[-(pu')'+qu\right]
\end{equation}
on an interval $(0, \infty)$ under the following assumptions on coefficients:
\begin{itemize}
\item[\textit{(i)}]
$k,p>0$ a.e. on $(0, \infty);$ \quad $k, 1/p\in L^1_{\loc}(0, \infty);$ \quad $q\in L^1_{\loc}(0, \infty)$ is real-valued;
\item[\textit{(ii)}]
There exists a number $\mu\in\mathbb{R}$ and functions $g_0,g_{\infty}\in AC_{\loc}(0, \infty)$ with $pg_0',pg_{\infty}'\in AC_{\loc}(0, \infty)$ and
$g_0>0$ near $0$, $g_{\infty}>0$ near $\infty$ such that
\begin{equation}\label{Kalf-2}
\int_0 \frac{1}{pg_0^2}=\int^{\infty}\frac{1}{pg_{\infty}^2}=\infty
\end{equation}
and
\begin{equation}\label{Kalf-3}
q\geqslant \frac{(pg_0')'}{g_0}-\mu k\quad \text{near} \quad 0, \quad q\geqslant \frac{(pg_{\infty}')'}{g_{\infty}}-\mu k\quad \text{near} \quad \infty.
\end{equation}
\end{itemize}
The main result of the paper \cite{Kalf} is the following description of Friedrichs' extension $\widehat{T}_F$ of the minimal operator $T_{\min}$ associated with \eqref{Kalf-1}:
\begin{equation}\label{Kalf-4}
\dom\widehat{T}_F=\left\{u\in\dom T_{\max}: \quad\int_0 pg_0^2\left|\left(\frac{u}{g_0}\right)'\right|^2<\infty, \quad\int^{\infty} pg_{\infty}^2\left|\left(\frac{u}{g_{\infty}}\right)'\right|^2<\infty\right\}.
\end{equation}
For more information see \cite[Theorem 1]{Kalf} and related remarks.


%

This result has been extended in \cite{Ges-3} to the case of singular differential operators on arbitrary intervals $(a,b) \subseteq \mathbb{R}$ associated with four-term general differential expressions of the type
\begin{equation}\label{17-1}
\tau u = \frac{1}{k} \left( - \big(p[u' + s u]\big)' + s p[u' + s u] + qu\right):=\frac{1}{k} \left(-(u^{[1]})'+su^{[1]}+qu\right),
\end{equation}
where the coefficients $p$, $q$, $k$, $s,$ are real-valued and Lebesgue measurable on $(a,b)$, with $p\neq 0$, $k>0$ a.e.\ on $(a,b)$, and $p^{-1}$, $q$, $k$, $s \in L^1_{\text{loc}}((a,b); dx)$, and $u$ is supposed to satisfy
\begin{equation}\label{17-2}
u \in AC_{\text{loc}}(a,b), \quad u^{[1]} \in AC_{\text{loc}}(a,b).
\end{equation}
In particular, this setup implies that $\tau$ permits a distributional potential coefficient,
including potentials in $H^{-1}_{\text{loc}}(a,b).$

Imposing additional to \eqref{Kalf-2}--\eqref{Kalf-3} assumptions on coefficients, the authors characterize the Friedrichs extension of $T_{\text{min}}$ by the same conditions \eqref{Kalf-4}.
For more details see \cite[Theorems 11.17 and 11.19]{Ges-3}.

In \cite{Ges-3} it is also described the Krein extension of $T_{\text{min}}$ on a finite interval $(a,b)$ in the special case where $\tau$ is regular (i.e. $p^{-1}, q, k$ and $s$ are integrable near $a$ and $b$). A description is given as follows:
\begin{equation}\label{Gesztesy-16-2}
\dom \widehat{T}_K=\left\{g\in\dom T_{\max}: \quad
\begin{pmatrix}
g(b)\\
g^{[1]}(b)
\end{pmatrix}
=R_K
\begin{pmatrix}
g(a)\\
g^{[1]}(a)
\end{pmatrix}
\right\},
\end{equation}
where
\begin{equation}\label{R_K}
R_K=\frac{1}{u_1^{[1]}(a)}
\begin{pmatrix}
-u_2^{[1]}(a) & 1\\
u_1^{[1]}(a)u_2^{[1]}(b)-u_1^{[1]}(b)u_2^{[1]}(a) & u_1^{[1]}(b)
\end{pmatrix},
\end{equation}
and $u_j(\cdot), j\in\{1,2\},$ are positive solutions of $\tau u=0$ determined by the conditions
\begin{equation}
\begin{gathered}
u_1(a)=0, \quad u_1(b)=1,\\
u_2(a)=1, \quad u_2(b)=0.
\end{gathered}
\end{equation}
For more details see \cite[Theorem 12.3]{Ges-3}.

Several papers (see \cite{Ges-3}--\cite{Ful-2}, \cite{Kalf}--\cite{Kostes} and the references therein) are devoted to the spectral analysis of boundary value problems for the one-parametric Bessel's differential expression
\begin{equation}\label{Bes}
\tau_{\nu}=-\frac{d^2}{dx^2}+\frac{\nu^2-\frac{1}{4}}{x^2}, \quad \nu\in[0,1)\setminus\left\{1/2\right\}.
\end{equation}
We especially note the papers of H. Kalf and W. Everitt \cite{Kalf, EvKalf}, where the explicit form of the Weyl-Titchmarsh $m-$coefficient
of the expression $\tau_{\nu}$ in $L^2(\mathbb{R}_+)$ was found.

In \cite{AnBud, Bru, EvKalf, Kalf}, there were described domains of the Friedrichs extension for the minimal operator $A_{\nu, \infty}$ associated with expression \eqref{Bes} in $L^2(\mathbb{R}_+)$. In \cite{EvKalf} the same was done for all self-adjoint extensions of the operator   $A_{\nu, \infty}$. The most complete result was obtained in \cite{AnBud}.
Namely, $\widehat{A}_{\nu,\infty,F}$ and $\widehat{A}_{\nu,\infty,K}$ are the restrictions of the maximal operator $A_{\nu,\infty}^*=A_{\nu,\infty, \max}$ to the domains
\begin{equation}
\dom \widehat{A}_{\nu,\infty,F}=\left\{f\in \dom{A^*_{\nu,\infty}}
:[f, x^{\frac{1}{2}+\nu}]_0=0\right\}
\end{equation}
and
\begin{equation}
\dom\widehat{A}_{\nu,\infty,K}=\left\{
\begin{array}{ll}
\{f\in \dom{A^*_{\nu,\infty}}:[f, x^{\frac{1}{2}-\nu}]_0=0\}, &
\nu\in(0,1),\\
\{f\in \dom{A^*_{0,\infty}} :[f, x^{\frac{1}{2}}]_0=0\},
&\nu=0
\end{array}
\right.
\end{equation}
respectively, where
\begin{equation}\label{A_max}
\dom{A^*_{\nu,\infty}}= \left\{
\begin{array}{ll}
{H}_0^2(\mathbb R_+)\dot{+}\text{span}\{x^{1/2+\nu}\xi(x),
x^{1/2-\nu}\xi(x)\}, \nu\in(0,1),\\
{H}_0^2(\mathbb R_+)\dotplus\text{span}\{x^{1/2}\xi(x), x^{1/2}\log(x)\xi(x)\}, \nu=0.
\end{array}
\right.\\
\end{equation}
Here $[f,g]_x:=f(x)\overline{g'(x)}-f'(x)\overline{g(x)}$ for all $x\in\mathbb{R}_+,$ and $\xi\in C_0^2(\mathbb R_+)$ is a function such that $\xi(x)=1$
{whenever} $x\in[0,1].$ For more details see \cite[Proposition 5.7 and Remark 5.8]{AnBud}.

Friedrichs' and Krein's extensions $\widehat{A}_{\nu,b,F}$ and $\widehat{A}_{\nu,b,K}$ of the minimal operator corresponding to \eqref{Bes} on a finite interval $(0,b)$ were also described there (see \cite[Proposition 4.5]{AnBud}).

M.G. Krein (\cite[II]{Kre47}) investigated certain extensions of the minimal operator $T_{\min}$ associated in $L^2(a,b)$ with the following quasi-derivative expression
\begin{equation}\label{Krei-1}
Tf:=f^{[2n]},
\end{equation}
where
\begin{equation}\label{Krei-2}
f^{[2n]}:=p_n f-\frac{d}{dx}\left[p_{n-1}f^{(1)}-\frac{d}{dx}[p_{n-2}f^{(2)}-...-\frac{d}{dx}[p_1f^{(n-1)}-\frac{d}{dx}(p_0f^{(n)})]...]   \right].
\end{equation}
In the case of sufficiently smooth coefficients $p_k, k\in\{0,1,...,n\},$ expression \eqref{Krei-2} can be written in the Jacobi-Bertrand form:
\begin{equation}\label{J-B}
f^{[2n]}=\sum_{k=0}^n (-1)^k \frac{d^k}{dx^k}\left(p_{n-k}\frac{d^kf}{dx^k}\right).
\end{equation}
In \cite[II]{Kre47}  Friedrichs' extension of the minimal operator $T_{\min}$ corresponds to Dirichlet realization:
\begin{equation}\label{Krei-3}
\dom \widehat{T}_F = \{f\in \dom T_{\max}: f^{[k]}(a)=f^{[k]}(b)=0, \quad k\in\{0,1,...,n-1\}\}.
\end{equation}

In the paper by A. Lunyov \cite{Lun} the spectral properties of the operator $A$ generated in $L^{2}(\mathbb{R}_+)$ by the differential expression
\begin{equation}\label{Lun-1}
l:=(-1)^{n}\frac{d^{2n}}{dx^{2n}}
\end{equation}
are investigated, and the Krein extension of the corresponding minimal operator $A_{\min}$ in terms of boundary conditions is described in the following way:
\begin{equation}\label{Lun-K}
y^{(n)}(0)=y^{(n+1)}(0)=...=y^{(2n-1)}(0)=0.
\end{equation}

Using the technique of boundary triplets and the corresponding Weyl functions the author found explicit form of the characteristic matrix and the corresponding spectral function for the Friedrichs and Krein extensions of the minimal operator $A_{\min}$ (see \cite[Theorems 1 and 2]{Lun}).

In \cite{Ges-2} the unitary equivalence of the inverse of the Krein extension
(on the orthogonal complement of its kernel) of a densely defined, closed, strictly positive
operator, $S\geq \varepsilon I_{\mathcal{H}}$ for some $\varepsilon >0$ in a Hilbert space $\mathcal{H}$ to an abstract buckling problem operator is proved.
%

Several papers are devoted to Friedrichs' and Krein's extensions of perturbed Laplacian on bounded and unbounded domains.

For instance on the subject of semibounded extensions of non-negative symmetric operators we refer to M.Sh. Birman \cite{Birman}, G. Grubb \cite{Grubb-1} (elliptic operators on bounded domains with smooth boundary), J. Behrndt et al. \cite{Behrndt-1, Behrndt-2} (elliptic operators on Lipschitz domains), F. Gesztesy and M. Mitrea \cite{Ges-4} (Laplacian on non-smooth domains).

In \cite{Ges-1} the authors study spectral properties for $\widehat{H}_{K,\Omega}$, the Krein
extension of the perturbed Laplacian $-\Delta+V$ defined on
$C^\infty_0(\Omega)$, where $V$ is measurable, bounded and nonnegative, in
a bounded open set $\Omega\subset\mathbb{R}^n$ belonging to a class of
nonsmooth domains which contains all convex domains, along with all domains
of class $C^{1,r}$, $r>1/2.$ 

See also ~\cite{AnNis, Arl, AGMST, Bro, Ges-5, Grubb, Hart, HasMal, Nen} and the references therein.

However, the problem of finding $M(0)$ is nontrivial even in the case of positively definite operator. Its solution is known in some cases --- see papers \cite[Theorem 1.1]{Bru}, \cite[Proposition 4.5 (ii), Proposition 5.7 (ii)]{AnBud}, \cite[Theorem 1]{Bro}, \cite[Theorem 2]{Lun} mentioned above.







Here we consider the minimal operator $A:=A_{\min}$ associated with the differential expression
\begin{equation}\label{Op. A}
\mathcal{A}:=(-1)^n\frac{d^{2n}}{dx^{2n}}
\end{equation}
on a finite interval $(a,b),$ we describe its Krein's extension in terms of boundary conditions.
In this way we find $M(0)$ for special (natural) boundary triplet for $A^*$. Note that the corresponding boundary operator is expressed by means of blocks of certain auxiliary Toeplitz matrix (see~\eqref{B_K}). Using the technique of boundary triplets and the corresponding Weyl functions developed in~\cite{DerMal91} we describe all non-negative extensions of $A_{\min}$ as well as extensions with the finite negative spectrum.

\section{Preliminaries}
Let  $A$ be a densely defined closed symmetric operator in a separable
Hilbert space $\gH$ with equal deficiency indices $\mathrm{n}_\pm(A)=\dim(\cN_{\pm \I})
\leq \infty,$ where $\cN_z:=\ker(A^*-z)$ is the defect subspace.


\begin{definition}
A closed extension $A'$ of $A$ is called a \emph{proper
one} if $A\subset A' \subset A^*$.  The set of all proper extensions of  $A$
completed by the (non-proper) extensions $A$ and $A^*$ is  denoted  by $\Ext_A$.
\end{definition}

Assume that operator $A\in\mathcal{C}(\mathfrak{H})$ is non-negative.
Then the set $\Ext_{A}(0,\infty)$ of its non-negative self-adjoint extensions is
non-empty (see~\cite{AkhGl, Kato66}). Moreover, there is a maximal non-negative extension $\widehat{A}_F$ (also called Friedrichs' or hard extension), and there is a minimal non-negative extension $\widehat{A}_K$ (Krein's or soft extension) satisfying~\eqref{muM}. For details we refer the reader to~\cite{AkhGl, GorGor91}.

\begin{definition}[\cite{GorGor91}]\label{def_ordinary_bt}
A triplet $\Pi=\{\cH,\gG_0,\gG_1\}$ is called a {\rm boundary triplet} for
the adjoint operator $A^*$ if $\cH$ is an auxiliary Hilbert space and
$\Gamma_0,\Gamma_1:\  \dom A^*\rightarrow \cH$ are linear mappings such that the
abstract Green identity
\begin{equation}\label{II.1.2_green_f}
(A^*f,g)_\gH - (f,A^*g)_\gH = (\gG_1f,\gG_0g)_\cH - (\gG_0f,\gG_1g)_\cH, \qquad
f,g\in\dom A^*,
\end{equation}
holds and the mapping $\gG:=\begin{pmatrix}\Gamma_0\\\Gamma_1\end{pmatrix}:  \dom A^*
\rightarrow \cH \oplus \cH$ is surjective.
\end{definition}

First,  note that a boundary triplet for $A^*$ exists whenever  the deficiency indices
of $A$ are equal, $\mathrm{n}_+(A)= \mathrm{n}_-(A)$.  Moreover, $\mathrm{n}_\pm(A) =
\dim\cH$ and $\ker\Gamma = \ker\Gamma_0 \cap \ker\Gamma_1= \dom A$. Note also
that $\Gamma$ is a bounded mapping from $\gotH_+ = \dom A^*$ equipped with the graph
norm to $\cH\oplus\cH.$

A boundary triplet for $A^*$ is not unique. Moreover, for any self-adjoint extension
$\wt A := \wt A^*$ of $A$ there exists a boundary triplet $\Pi=\{\cH,\gG_0,\gG_1\}$ such
that $\ker\Gamma_0 = \dom \wt A$.



\begin{definition}[{\cite{DerMal91}}]\label{def_Weylfunc}
Let $A$ be a densely defined closed symmetric operator in $\gH$ with equal deficiency
indices, and let $\Pi=\{\cH,\gG_0,\gG_1\}$ be a boundary triplet for $A^*$. The operator
valued functions $\gamma(\cdot) :\rho(A_0)\rightarrow  \mathcal{B}(\cH,\gH)$ and
$M(\cdot):\rho(A_0)\rightarrow  \mathcal{B}(\cH), A_0:=A^*\!\upharpoonright\ker\gG_0$
defined by
\begin{equation}\label{II.1.3_01}
\gamma(z):=\bigl(\Gamma_0\!\upharpoonright\cN_z\bigr)^{-1} \qquad\text{and}\qquad
M(z):=\Gamma_1\gamma(z), \qquad z\in\rho(A_0),
\end{equation}
are called the {\em $\gamma$-field} and the {\em Weyl function}, respectively,
corresponding to the boundary triplet $\Pi.$
\end{definition}

\begin{remark}[{\cite{R-B}, \cite[Ch. VIII]{AkhGl}}]\label{A_CD}
 In the case of $\mathrm{n}_{\pm}(A)=m<\infty$, the set of all self-adjoint extensions of the operator $A$ is parametrized as follows:
\begin{equation}\label{C-D}
\begin{gathered}
\Ext_A\ni\widetilde{A}=\widetilde{A}^*=A_{C,D}=A^*\upharpoonright\ker(D\Gamma_1-C\Gamma_0), \\
where \quad CD^*=DC^*,\quad \det(CC^*+DD^*)\neq 0, \quad C, D\in\mathbb{C}^{m\times m}.
\end{gathered}
\end{equation}
\end{remark}

%
%
%
%

\begin{definition}\label{kappa}
Let $T$ be a self-adjoint operator in $\mathfrak{H}$, and let $E_T(\cdot)$ be its spectral measure.
It is said that the operator $T$ has $\kappa$ negative eigenvalues if
\begin{equation}\label{kappa_numbers}
\kappa_-(T):=\dim E_T(-\infty,0)=\kappa.
\end{equation}
\end{definition}

In the following proposition all self-adjoint extensions of an operator $A\geq 0$ with a finite negative spectrum are described.

\begin{proposition}[{\cite{DerMal91, DerMal95}}]\label{A_0}
Let $A$ be  a densely defined non-negative symmetric operator  in $\gotH$, $\mathrm{n}_{\pm}(A)=m<\infty,$ let $\Pi=\{\cH,\gG_0,\gG_1\}$ be a boundary triplet for  $A^*$ such that  $A_0\geq 0,$ and let $A_{C,D}$ be an arbitrary self-adjoint extension of the form~\eqref{C-D}.
Let  also  $M(\cdot)$ be the corresponding Weyl function.  Then:
\begin{itemize}
\item[\textit{(i)}]  There exist  strong   resolvent  limits
\begin{equation}\label{W_lim}
M(0):=s-R-\lim\limits_{x\uparrow 0}M(x), \qquad M(-\infty):=s-R-\lim\limits_{x\downarrow
-\infty}M(x).
\end{equation}
%
 \item[\textit{(ii)}] $\dom A_0\cap\dom \widehat{A}_K=\dom A$ \quad $(\dom A_0\cap\dom \widehat{A}_F=\dom A)$
 if and only if $M(0)\in\mathbb{C}^{m\times m}$ $(M(-\infty)\in\mathbb{C}^{m\times m})$. Moreover, in this case
\begin{equation}\label{A_K_M(0)}
\widehat{A}_K=A^*\upharpoonright{\ker}\left(\Gamma_1-M(0)\Gamma_0\right), \quad \left(\widehat{A}_F=A^*\upharpoonright{\ker}\left(\Gamma_1-M(-\infty)\Gamma_0\right)\right).
\end{equation}
 \item[\textit{(iii)}] $A_0=\widehat{A}_F$
($A_0=\widehat{A}_K$) if and only if
\begin{equation}\label{WAFK}
\lim_{x\downarrow -\infty}\left(M(x)f,f \right)=-\infty \quad
\left(\lim_{x\uparrow 0}\left(M(x)f,f \right)=+\infty\right), \quad f\in\mathcal{H}\setminus\{0\}.
\end{equation}

\item[\textit{(iv)}] If $A_0=\widehat{A}_F,$ then the following identity holds:
\begin{equation}\label{kappa}
 \kappa_-(A_{C,D})=\kappa_-(CD^*-DM(0)D^*).
\end{equation}
In particular, $A_{C,D}\geq 0$ if and only if $CD^*-DM(0)D^*\geq 0.$
 \item[\textit{(v)}] The extension $A_B=A^*\upharpoonright\ker(\Gamma_1-B\Gamma_0)$ is symmetric (self-adjoint) if and only if $B$ is symmetric (self-adjoint).
  \end{itemize}
\end{proposition}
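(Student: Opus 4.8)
The plan is to reduce all five assertions to two structural facts about the Weyl function on the negative semi-axis. Since $A_0\geq 0$ we have $(-\infty,0)\subset\rho(A_0)$, so $x\mapsto M(x)$ is holomorphic and $\mathcal{B}(\cH)$-valued there, and the standard identity $M'(x)=\gamma(x)^*\gamma(x)\geq 0$ shows that $M(\cdot)$ is monotonically non-decreasing on $(-\infty,0)$. The second fact is the eigenvalue correspondence: for $z\in\rho(A_0)$ one writes an eigenfunction of $A_{C,D}$ as $f=\gamma(z)\xi$ with $\xi=\Gamma_0 f$, so that $\Gamma_1 f=M(z)\xi$ by \eqref{II.1.3_01}, and the condition $D\Gamma_1 f=C\Gamma_0 f$ of \eqref{C-D} becomes $(DM(z)-C)\xi=0$; hence $z\in\sigma_p(A_{C,D})$ iff $\det(DM(z)-C)=0$, with matching multiplicities. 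Part (i) is then the theorem on monotone limits of self-adjoint relations: a non-decreasing family converges in the strong resolvent sense to its supremum as $x\uparrow 0$ and to its infimum as $x\downarrow-\infty$, yielding $M(0)$ and $M(-\infty)$ of \eqref{W_lim}. Part (v) is immediate from Green's identity \eqref{II.1.2_green_f}: for $f,g\in\dom A_B$, substituting $\Gamma_1 f=B\Gamma_0 f$ and $\Gamma_1 g=B\Gamma_0 g$ gives $(A^*f,g)-(f,A^*g)=(B\Gamma_0 f,\Gamma_0 g)-(\Gamma_0 f,B\Gamma_0 g)$, which vanishes for all such $f,g$ iff $B=B^*$, because $\Gamma_0$ maps $\dom A_B$ onto $\cH$; finiteness of the deficiency indices upgrades symmetry of $A_B$ to self-adjointness.

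For (ii) I would identify the soft extension through $\dom\widehat{A}_K=\dom A\dotplus\cN_0$ with $\cN_0=\ker A^*$: writing $f=f_0+\gamma(0)\xi$, $f_0\in\dom A$, gives $\Gamma_0 f=\xi$ and $\Gamma_1 f=M(0)\xi=M(0)\Gamma_0 f$, which is exactly the boundary condition in \eqref{A_K_M(0)} and is meaningful precisely when $M(0)$ is a bounded (matrix) operator. In that case $\Gamma_0 f=0$ together with $\Gamma_1 f=M(0)\Gamma_0 f$ forces $\Gamma f=0$, i.e. $\dom A_0\cap\dom\widehat{A}_K=\dom A$; conversely a nontrivial multivalued part of $M(0)$ enlarges this intersection, which proves the stated equivalence. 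The Friedrichs statement is dual, with $M(-\infty)$ in place of $M(0)$. For (iii), $A_0=\widehat{A}_F$ says the hard extension is cut out by $\Gamma_0 f=0$, which is the degenerate limit of $\Gamma_1 f=M(-\infty)\Gamma_0 f$ precisely when $(M(x)f,f)\to-\infty$ for every $f\neq 0$; the Krein case is the dual limit $(M(x)f,f)\to+\infty$ as $x\uparrow 0$. Both directions follow by comparing resolvents via Krein's formula and invoking the extremal property \eqref{muM}.

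The real work is (iv). Because every $A_{C,D}$ is a finite-rank resolvent perturbation of $A_0=\widehat{A}_F\geq 0$, its essential spectrum lies in $[0,\infty)$, and the eigenvalue correspondence gives $\kappa_-(A_{C,D})=\sum_{x<0}\dim\ker(DM(x)-C)$. Introduce the Hermitian family $\mathcal{Q}(x):=CD^*-DM(x)D^*$, which is Hermitian thanks to $CD^*=DC^*$ and $M(x)=M(x)^*$, and satisfies $\mathcal{Q}'(x)=-D\gamma(x)^*\gamma(x)D^*\leq 0$, so that $\kappa_-(\mathcal{Q}(x))$ is non-decreasing. Part (iii) with $A_0=\widehat{A}_F$ gives $M(x)\to-\infty$, hence $\mathcal{Q}(x)\to+\infty$ on $\ran D^*$ and $\kappa_-(\mathcal{Q}(-\infty))=0$, while $\kappa_-(\mathcal{Q}(0^-))=\kappa_-(CD^*-DM(0)D^*)$. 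The content of (iv) is that the total number of eigenvalues of $A_{C,D}$ below $0$ equals the net number of eigenvalues of $\mathcal{Q}(\cdot)$ crossing $0$ downward on $(-\infty,0)$, whence $\kappa_-(A_{C,D})=\kappa_-(CD^*-DM(0)D^*)$, and the final assertion $A_{C,D}\geq 0\iff CD^*-DM(0)D^*\geq 0$ is the case of zero count.

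I expect the main obstacle to be a non-invertible $D$: the equivalence between singularity of $DM(x)-C$ and of $\mathcal{Q}(x)$ is not literal when $D$ is degenerate, so one must exploit the admissibility conditions $CD^*=DC^*$ and $\det(CC^*+DD^*)\neq 0$ — for instance via a unitary rotation of the pair $(C,D)$ — to reduce to the nondegenerate situation before running the monotone crossing count. A secondary subtlety is the bookkeeping at the endpoints, namely confirming that no eigenvalue escapes to $-\infty$ (guaranteed by $A_0=\widehat{A}_F$) and that the open-interval count correctly excludes a possible eigenvalue at $0$.
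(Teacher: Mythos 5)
First, a point of reference: the paper itself gives no proof of Proposition~\ref{A_0} --- it is imported verbatim from \cite{DerMal91,DerMal95} --- so your attempt has to stand on its own merits. Parts (i) and (v) of your sketch are essentially sound: monotonicity $M'(x)=\gamma(x)^*\gamma(x)\geq 0$ on $(-\infty,0)\subset\rho(A_0)$ plus the monotone-limit theorem for Hermitian matrices (relations) gives \eqref{W_lim}, and the Green's-identity argument for (v), together with the observation that $\Gamma_0\upharpoonright\dom A_B$ is onto $\cH$, is correct.

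The first genuine gap is in (ii), and it propagates to (iii). You identify $\widehat A_K$ through $\dom\widehat A_K=\dom A\dotplus\cN_0$ and write $f=f_0+\gamma(0)\xi$. That formula is Theorem~\ref{Kr-2}, i.e.\ \eqref{Kr_dom}, and it holds for \emph{positively definite} $A$, whereas the proposition assumes only $A\geq 0$; moreover $\gamma(0)$ need not exist, since in general $0\in\sigma(A_0)$. Both failures occur in a standard example: for the minimal operator of $-d^2/dt^2$ in $L^2(\R_+)$ with the triplet $\Gamma_0f=f(0)$, $\Gamma_1f=f'(0)$ one has $M(x)=-\sqrt{-x}$, so $M(0)=0$ is a perfectly good $1\times 1$ matrix, yet $\cN_0=\{0\}$, the functions $(\gamma(x)1)(t)=e^{-\sqrt{-x}\,t}$ have no $L^2$-limit as $x\uparrow 0$, and $\widehat A_K$ (the Neumann realization) is certainly not $\dom A\dotplus\cN_0$. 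The actual content of (ii) --- that the extremal extension defined by \eqref{muM} coincides with $A^*\upharpoonright\ker\left(\Gamma_1-M(0)\Gamma_0\right)$, where $M(0)$ is the \emph{limit} from \eqref{W_lim} --- is exactly what your argument presupposes rather than proves. Likewise (iii) is not proved: ``comparing resolvents via Krein's formula and invoking \eqref{muM}'' is a pointer, not an argument; the known proofs of \eqref{WAFK} go through the integral formula $(M(x)h,h)=(M(y)h,h)+\int_y^x\|\gamma(t)h\|^2\,dt$ combined with the quadratic-form characterization of $\widehat A_F$, and neither direction is a formality.

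In (iv) you set up the right objects (eigenvalue correspondence, the monotone Hermitian family $\mathcal{Q}(x)=CD^*-DM(x)D^*$), but you explicitly leave the degenerate-$D$ case open, and the repair you propose cannot work: the pairs $(C,D)$ describing a fixed extension $A_{C,D}$ are unique only up to left multiplication $(UC,UD)$ by invertible $U$, which preserves $\Rank D$, while any genuine rotation mixing $C$ and $D$ changes the extension (equivalently, changes the triplet, destroying both the hypothesis $A_0=\widehat A_F$ and the meaning of $CD^*-DM(0)D^*$). The correct repair stays with $\mathcal{Q}$: since $CD^*=DC^*$, the matrix $\mathcal{Q}(x)$ is Hermitian and annihilates $\ker D^*$, hence $\mathcal{Q}(x)=\widetilde{\mathcal{Q}}(x)\oplus 0$ with respect to $\cH=\ran D\oplus\ker D^*$; writing the boundary relation as $\{(D^*h,C^*h):h\in\cH\}$ (possible because $\ker C^*\cap\ker D^*=\{0\}$) one checks $\dim\ker(A_{C,D}-x)=\dim\ker\left(C^*-M(x)D^*\right)=\dim\ker\widetilde{\mathcal{Q}}(x)$ for $x<0$, after which your crossing count runs on $\widetilde{\mathcal{Q}}$: no eigenvalue branch can vanish on an interval (else $A_{C,D}$ would have non-discrete negative spectrum, impossible for a finite-rank resolvent perturbation of $A_0\geq 0$), and $\kappa_-\big(\mathcal{Q}(x)\big)=0$ for $x\ll 0$ follows from a compactness argument on the unit sphere of $\ran D$, on which $D^*$ is injective. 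Note also that this endpoint step uses $M(x)\to-\infty$, i.e.\ the still-unproved part (iii). So (iv) is repairable along your lines, but as written it is a program, not a proof.
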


\begin{theorem}[{\cite[I, Theorem 14]{Kre47}}]\label{Kr-2}
Let $A$ be a symmetric positively definite operator. Then
$\dom\widehat{A}_K=\dom A\dotplus\cN_0$, and
\begin{equation}\label{eq: T}
\widehat{A}_K(f+f_0)=Af\quad\mbox{for any}\quad f\in\dom A,\,\, f_0\in\cN_0.
\end{equation}
\end{theorem}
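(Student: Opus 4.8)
The plan is to verify directly that the operator on the right-hand side of \eqref{Kr_dom}, namely $T:=A^*\!\upharpoonright(\dom A\dotplus\cN_0)$ with $\cN_0=\ker A^*$, is a non-negative self-adjoint extension of $A$, and then to identify it with the minimal (soft) extension singled out by \eqref{muM}. Two consequences of positive definiteness $A\ge\e I$ are used at the outset. Since $Af=0$ forces $f=0$, the sum $\dom A\dotplus\cN_0$ is direct. Since $\widehat A_F\ge\e I$ we have $0\in\rho(\widehat A_F)$, so $\dom A^*=\dom\widehat A_F\dotplus\cN_0$ and in particular $\dim\cN_0=\mathrm n_\pm(A)=m$. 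Finally, because $A^*f_0=0$ for $f_0\in\cN_0$, the action formula \eqref{eq: T} is automatic once the domain is known: $T(f+f_0)=A^*(f+f_0)=Af$.

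Next I would check that $T$ is symmetric, non-negative and self-adjoint. Everything rests on the identity $(Af,g_0)=(f,A^*g_0)=0$ for $f\in\dom A$ and $g_0\in\cN_0$, which is the defining relation of the adjoint combined with $\cN_0=\ker A^*$. Applying it to both cross terms gives, for $u=f+f_0$ and $v=g+g_0$, the equality $(Tu,v)=(Af,g)=(u,Tv)$, so $T$ is symmetric; setting $v=u$ yields $(Tu,u)=(Af,f)\ge\e\|f\|^2\ge0$, so $T\ge0$. For self-adjointness I would read the same computation as the statement that $\cN_0$ is isotropic for the boundary (symplectic) form on $\dom A^*/\dom A\cong\C^{2m}$; since $\dim\cN_0=m$ is half of $2m$, this subspace is Lagrangian, and hence the symmetric extension $T$ is self-adjoint.

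The substantive step, and the one I expect to be the main obstacle, is to show that $T$ is the Krein extension, i.e.\ the \emph{minimal} non-negative extension in \eqref{muM}, and not merely some non-negative self-adjoint extension. The guiding observation is that $T$ has the largest possible kernel: any self-adjoint extension $\widetilde A$ satisfies $\widetilde A\subset A^*$, so $\widetilde Af=0$ forces $f\in\ker A^*=\cN_0$ and thus $\ker\widetilde A\subseteq\cN_0$, whereas $\ker T=\cN_0$ by construction. I would then convert this maximal-kernel property into minimality among non-negative extensions by passing to quadratic forms: the closure of the form $u=f+f_0\mapsto(Af,f)$ has form domain equal to that of $\widehat A_F$ augmented by $\cN_0$, with the form vanishing on the $\cN_0$-component, and this is precisely the largest closed non-negative form extending $f\mapsto(Af,f)$, hence the form of $\widehat A_K$; by the first representation theorem $T=\widehat A_K$.

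A self-contained alternative, avoiding the Krein--Birman--Vishik classification implicit above, is to establish the resolvent inequality $(\widetilde A+x)^{-1}\le(T+x)^{-1}$ directly for every non-negative extension $\widetilde A$ and every $x>0$. Choosing a boundary triplet with $A_0=T$ (legitimate since $-x\in\rho(T)$) and inserting Krein's resolvent formula, the difference of resolvents takes the form $\gamma(-x)\,S(x)\,\gamma(-x)^*$ with $S(x)$ self-adjoint in $\cN_0$, and one checks $S(x)\le0$ using $\widetilde A\ge0$. Whichever route is taken, the delicate point is exactly this verification that the governing boundary parameter (equivalently, the form on $\cN_0$) has the correct sign; the content of the first two paragraphs is routine once positive definiteness is exploited. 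Combined with the uniqueness built into \eqref{muM}, this yields $T=\widehat A_K$ and therefore both conclusions of the theorem.
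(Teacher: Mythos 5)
A preliminary remark: the paper contains no proof of Theorem~\ref{Kr-2} at all --- it is imported verbatim from Krein's 1947 paper (\cite[I, Theorem 14]{Kre47}), so there is no internal argument to compare yours against; your proposal has to stand on its own. Its routine half does stand: with $T:=A^*\!\upharpoonright(\dom A\dotplus\cN_0)$, the directness of the sum, the symmetry computation $(Tu,v)=(Af,g)=(u,Tv)$, and the non-negativity $(Tu,u)=(Af,f)\ge\e\|f\|^2$ are all correct. (One caveat: your self-adjointness argument, ``isotropic of half dimension in $\C^{2m}$ is Lagrangian,'' silently assumes finite deficiency indices, which the theorem does not. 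A direct argument avoids this: if $g\in\dom T^*$, testing against $\dom A$ gives $g\in\dom A^*$ with $A^*g=T^*g$; testing against $\cN_0$ gives $T^*g\perp\cN_0=(\ran A)^\perp$, and $\ran A$ is closed since $A\ge\e I$ is closed, so $T^*g=Af_1$ for some $f_1\in\dom A$; then $g-f_1\in\ker A^*$, i.e. $g\in\dom T$.)

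The genuine gap is the step you yourself flag as ``the main obstacle'': the identification $T=\widehat A_K$, which is never actually carried out. Your first route is circular as written --- the assertion that the closure of the form $f+f_0\mapsto(Af,f)$ is ``precisely the largest closed non-negative form extending $f\mapsto(Af,f)$, hence the form of $\widehat{A}_K$'' is the Birman--Krein--Vishik/Ando--Nishio characterization of the Krein extension, i.e. a restatement of the theorem, not a proof of it; moreover ``largest'' is not even the right ordering (Krein's form is the \emph{minimal} one among forms of non-negative self-adjoint extensions, with maximal form domain). Your second route correctly reduces matters to the sign check $S(x)\le 0$ in Krein's resolvent formula, but that check is exactly where the content of Krein's theorem lives, and it is omitted. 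What you missed is that no such heavy machinery is needed, because the paper (and your own setup) already takes as known the \emph{existence} of a minimal element $\widehat A_K$ of $\Ext_A(0,\infty)$ satisfying \eqref{muM}. Granting that, the identification is cheap: for $f_0\in\cN_0=\ker T$ and $x>0$, inequality \eqref{muM} applied to $\widetilde A=T$ gives $x^{-1}\|f_0\|^2=\left((T+x)^{-1}f_0,f_0\right)\le\left((\widehat A_K+x)^{-1}f_0,f_0\right)$, while $\widehat A_K\ge 0$ gives $(\widehat A_K+x)^{-1}\le x^{-1}I$; hence $\left(\bigl(x^{-1}I-(\widehat A_K+x)^{-1}\bigr)f_0,f_0\right)=0$, and since $x^{-1}I-(\widehat A_K+x)^{-1}\ge 0$ this forces $(\widehat A_K+x)^{-1}f_0=x^{-1}f_0$, i.e. $f_0\in\ker\widehat A_K$. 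Therefore $\dom\widehat A_K\supseteq\dom A\dotplus\cN_0=\dom T$; as both operators are restrictions of $A^*$, this means $T\subseteq\widehat A_K$, and two self-adjoint operators related by inclusion coincide. This yields both $\dom\widehat A_K=\dom A\dotplus\cN_0$ and the action formula \eqref{eq: T}, using only what you had already established plus the quoted minimality \eqref{muM}.
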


\section{Main result}
Let $A:=A_{\min}$ be the minimal operator generated in $\mathfrak{H}=L^2\left(a,b\right), -\infty<a<b<\infty$ by the differential expression \eqref{Op. A}. In view of~\cite{DerMal92},
the boundary triplet for $A^*:=A_{\max}$ can be taken as
\begin{equation}\label{bt}
\mathcal{H}=\mathbb{C}^{2n}, \quad
\Gamma_0 f=
\begin{pmatrix}
f(a) \\
\vdots \\
f^{(n-1)}(a) \\
f(b) \\
\vdots \\
f^{(n-1)}(b)
\end{pmatrix}, \quad \Gamma_1 f=
\begin{pmatrix}
(-1)^{n-1}f^{(2n-1)}(a) \\
\vdots \\
f^{(n)}(a) \\
(-1)^{n}f^{(2n-1)}(b) \\
\vdots \\
-f^{(n)}(b)
\end{pmatrix}.
\end{equation}

The main result of this paper is presented by the following theorem.

\begin{theorem}\label{A_K} Let $A$ be the minimal operator defined by~\eqref{Op. A}.
Let also $\Pi=\left\{\mathcal{H}, \Gamma_0, \Gamma_1\right\}$ be the boundary triplet for $A^*$ defined by relations~\eqref{bt}. Then the following assertions hold.
\item[(i)]
The domain of Krein's extension $\widehat{A}_K$ is of the form
\begin{equation}\label{dom_A_K}
\dom\widehat{A}_K=\left\{f\in W^{2n,2}(a,b):
\begin{pmatrix}
f^{(2n-1)}(b) \\
\vdots \\
f(b)
\end{pmatrix}
=T\begin{pmatrix}
f^{(2n-1)}(a) \\
\vdots \\
f(a)
\end{pmatrix}\right\},
\end{equation}
where $T$ is the Toeplitz lower-triangular $2n\times 2n$ matrix of the form
\begin{equation}\label{Matr_T}
T=\begin{pmatrix}
1 & \dots & & & {\bf 0} \\
b-a & 1 & \dots \\
\hdotsfor{5}   \\
\frac{(b-a)^{2n-1}}{(2n-1)!} & \frac{(b-a)^{2n-2}}{(2n-2)!} & \dots & b-a & 1
\end{pmatrix}.
\end{equation}
\item[(ii)]
Krein's extension $\widehat{A}_K$ is given by
\begin{equation}\label{domA_K-0}
\dom\widehat{A}_K=\left\{f\in W^{2n,2}(a,b):\Gamma_1f=B_K\Gamma_0f\right\},
\end{equation}
where
\begin{equation}\label{B_K}
B_K=\begin{pmatrix}
QT_2^{-1}T_1S & -QT_2^{-1}S \\
-QT_1T_2^{-1}T_1S & QT_1T_2^{-1}S
\end{pmatrix},
\end{equation}
and $T_1, T_2, Q, S$ are the following $n\times n$ matrices:
\begin{equation}\label{TCS}
\begin{gathered}
T_1=\begin{pmatrix}
1 & \dots & & & {\bf 0} \\
b-a & 1 & \dots \\
\hdotsfor{5}   \\
\frac{(b-a)^{n-1}}{(n-1)!} & \frac{(b-a)^{n-2}}{(n-2)!} & \dots & b-a & 1
\end{pmatrix}, \quad T_2=\begin{pmatrix}
\frac{(b-a)^n}{n!} & \frac{(b-a)^{n-1}}{(n-1)!} & \dots & b-a \\
\frac{(b-a)^{n+1}}{(n+1)!} & \frac{(b-a)^n}{n!} & \dots & \frac{(b-a)^2}{2!} \\
\hdotsfor{4}   \\
\frac{(b-a)^{2n-1}}{(2n-1)!} & \frac{(b-a)^{2n-2}}{(2n-2)!} & \dots & \frac{(b-a)^n}{n!}
\end{pmatrix},\\
Q=\begin{pmatrix}
(-1)^n & & {\bf 0} \\
& \ddots \\
{\bf 0} & & -1
\end{pmatrix}, \quad S=\begin{pmatrix}
  {\bf 0} &  & 1 \\
          & \adots &  \\
    1 & & {\bf 0}
\end{pmatrix}.
\end{gathered}
\end{equation}
\end{theorem}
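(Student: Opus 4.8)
The plan is to deduce part (i) directly from Krein's description in Theorem~\ref{Kr-2}, and then to obtain part (ii) as a purely linear-algebraic reformulation of the boundary relation in (i). First I would check that $A$ is positively definite, so that Theorem~\ref{Kr-2} applies. Since $\ker\Gamma=\dom A$ forces $\dom A=W_0^{2n,2}(a,b)$, i.e.\ all derivatives of $f$ up to order $2n-1$ vanish at $a$ and $b$, integrating by parts $n$ times makes every boundary term vanish and gives $(Af,f)=\int_a^b|f^{(n)}|^2\,dx$; the Poincaré inequality on $W_0^{n,2}(a,b)$ then yields $(Af,f)\ge\varepsilon\|f\|^2$ for some $\varepsilon>0$. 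Hence $\dom\widehat A_K=\dom A\dotplus\cN_0$, where $\cN_0=\ker A^*$ is the space of solutions of $f^{(2n)}=0$, namely the space $\mathcal P_{2n-1}$ of polynomials of degree at most $2n-1$, of dimension $2n=\mathrm n_\pm(A)$.

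For (i), write $f=g+p$ with $g\in W_0^{2n,2}(a,b)$ and $p\in\mathcal P_{2n-1}$. Because $g$ and all its derivatives up to order $2n-1$ vanish at both endpoints, every boundary value $f^{(j)}(a)$, $f^{(j)}(b)$ equals the corresponding value of $p$. Taylor's formula expanded at $a$ gives $p^{(2n-i)}(b)=\sum_{k=0}^{i-1}\frac{(b-a)^k}{k!}\,p^{(2n-i+k)}(a)$, which says precisely that the descending vector of $b$-boundary values equals $T$ times the descending vector of $a$-boundary values, with $T_{ij}=\frac{(b-a)^{i-j}}{(i-j)!}$ for $i\ge j$; this is the Toeplitz matrix~\eqref{Matr_T}. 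Conversely, given $f\in W^{2n,2}(a,b)=\dom A^*$ whose data satisfy the $T$-relation, let $p\in\mathcal P_{2n-1}$ be the unique polynomial with $p^{(j)}(a)=f^{(j)}(a)$ for $j=0,\dots,2n-1$; then $f-p$ has vanishing $a$-data, and since both $f$ and $p$ obey the same $T$-relation they share all $b$-data, so $f-p\in W_0^{2n,2}(a,b)=\dom A$ and $f\in\dom A\dotplus\cN_0$. This proves~\eqref{dom_A_K}.

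For (ii), I would rewrite the $2n\times2n$ relation of (i) in $n\times n$ block form. Partitioning $T$ into top/bottom halves of the derivative list gives $T=\left(\begin{smallmatrix}T_1&0\\T_2&T_1\end{smallmatrix}\right)$ with $T_1,T_2$ as in~\eqref{TCS}. Writing $\phi_\bullet=(f^{(2n-1)}(\bullet),\dots,f^{(n)}(\bullet))^\top$ and $\psi_\bullet=(f^{(n-1)}(\bullet),\dots,f(\bullet))^\top$ for $\bullet\in\{a,b\}$, the $T$-relation becomes $\phi_b=T_1\phi_a$ and $\psi_b=T_2\phi_a+T_1\psi_a$. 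Comparing with~\eqref{bt}, the reversal matrix $S$ and the sign matrix $Q$ give $\Gamma_0f=\left(\begin{smallmatrix}S\psi_a\\S\psi_b\end{smallmatrix}\right)$ and $\Gamma_1f=\left(\begin{smallmatrix}-Q\phi_a\\Q\phi_b\end{smallmatrix}\right)$. Solving the block relation, $\phi_a=T_2^{-1}(\psi_b-T_1\psi_a)$ and $\phi_b=T_1T_2^{-1}(\psi_b-T_1\psi_a)$, and substituting $\psi_a=S\eta_a$, $\psi_b=S\eta_b$ where $\Gamma_0f=(\eta_a,\eta_b)^\top$, yields $\Gamma_1f=B_K\Gamma_0f$ with exactly the block matrix~\eqref{B_K}. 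Since $Q^2=S^2=I$ and $T_1,T_2$ are invertible, each step is reversible, so the two descriptions of $\dom\widehat A_K$ coincide; comparison with Proposition~\ref{A_0}(ii) then also identifies $B_K=M(0)$.

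The main obstacle is to justify that $T_2$ is invertible, since it is a full (non-triangular) Toeplitz block, unlike the unipotent lower-triangular $T_1$. I would argue this via uniqueness of two-point Hermite interpolation: if $T_2\phi_a=0$, then the polynomial $p\in\mathcal P_{2n-1}$ with $\psi_a=0$ and $\psi_b=T_2\phi_a=0$ has vanishing value and first $n-1$ derivatives at both $a$ and $b$, hence $p\equiv0$ and $\phi_a=0$, so $T_2$ is injective. The remaining work is the sign and ordering bookkeeping needed to match the diagonal matrix $Q$ and the reversal $S$ to the prescribed entries of $\Gamma_0,\Gamma_1$ in~\eqref{bt}; this is routine but must be carried out consistently.
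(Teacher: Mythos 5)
Your proof is correct and follows essentially the same route as the paper: part (i) is reduced via Theorem~\ref{Kr-2} to Taylor's formula for the polynomials spanning $\ker A^*$, and part (ii) is the same block-matrix reformulation, solving $U_3=T_1U_4$, $U_1=T_2U_4+T_1U_2$ for $U_3,U_4$ and inserting the reversal matrix $S$ and sign matrix $Q$ to match \eqref{bt}. You additionally supply details the paper leaves implicit --- the positive definiteness of $A$ (needed to invoke Theorem~\ref{Kr-2}), the reverse inclusion in (i), and the invertibility of $T_2$ via uniqueness of two-point Hermite interpolation --- which strengthens rather than changes the argument.
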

\begin{proof}
(i) Let us consider the $k-$th row in \eqref{dom_A_K}:
\begin{equation}\label{k_th_row}
f^{(2n-k)}(b)=\sum_{m=1}^k\frac{f^{(2n-m)}(a)}{(2n-m)!}(b-a)^{2n-m}, \quad k\in\left\{1,2,\dots,n\right\}.
\end{equation}
Due to the Theorem \ref{Kr-2}, it suffices to prove~\eqref{k_th_row} for
$\ker A^*=\text{span}\left\{1,x,...,x^{2n-1}\right\}.$ Since $\ker A^*$
consists of polynomials of degree not greater than $2n-1$, the formula \eqref{k_th_row}
follows from Tailor's one for polynomials.

(ii) Let
\begin{equation}\label{U_j}
\begin{gathered}
U_1=\begin{pmatrix}
f^{(n-1)}(b) \\
\vdots \\
f(b)
\end{pmatrix}, \quad U_2=\begin{pmatrix}
f^{(n-1)}(a) \\
\vdots \\
f(a)
\end{pmatrix}, \quad U_3=\begin{pmatrix}
f^{(2n-1)}(b) \\
\vdots \\
f^{(n)}(b)
\end{pmatrix}, \quad U_4=\begin{pmatrix}
f^{(2n-1)}(a) \\
\vdots \\
f^{(n)}(a)
\end{pmatrix}, \\
U_{1,t}=\begin{pmatrix}
f(b) \\
\vdots \\
f^{(n-1)}(b)
\end{pmatrix}, \quad U_{2,t}=\begin{pmatrix}
f(a) \\
\vdots \\
f^{(n-1)}(a)
\end{pmatrix}.
\end{gathered}
\end{equation}
Then
\begin{equation}\label{Gamma}
\Gamma_0 f=\begin{pmatrix}
SU_2 \\
SU_1
\end{pmatrix}=\begin{pmatrix}
U_{2,t} \\
U_{1,t}
\end{pmatrix}, \quad \Gamma_1 f=\begin{pmatrix}
-QU_4 \\
QU_3
\end{pmatrix},
\end{equation}
and hence the equality in \eqref{dom_A_K} takes the form
\begin{equation}\label{dom_A_K-1}
\begin{pmatrix}
U_3 \\
U_1
\end{pmatrix}=\begin{pmatrix}
T_1 & \mathbb{O} \\
T_2 & T_1
\end{pmatrix}
\begin{pmatrix}
U_4 \\
U_2
\end{pmatrix} \quad \text{or} \quad\begin{cases}
U_3=T_1U_4+\mathbb{O}U_2 \\
U_1=T_2U_4+T_1U_2
\end{cases}.
\end{equation}
Expressing $U_4$ and $U_3$ from the latter we get
\begin{equation}\label{U_4_U_3}
\begin{cases}
U_4=T_2^{-1}U_1-T_2^{-1}T_1U_2 \\
U_3=T_1T_2^{-1}U_1-T_1T_2^{-1}T_1U_2
\end{cases}.
\end{equation}
Multiplying from the left the first equality by $-Q$ and the second one by $Q$  we obtain
\begin{equation}\label{C}
\begin{cases}
-QU_4=-QT_2^{-1}U_1+QT_2^{-1}T_1U_2 \\
QU_3=QT_1T_2^{-1}U_1-QT_1T_2^{-1}T_1U_2
\end{cases}.
\end{equation}
Since $U_1=SU_{1,t}, \quad U_2=SU_{2,t}$ then \eqref{C} yields
\begin{equation}\label{CU_t}
\begin{cases}
-QU_4=QT_2^{-1}T_1SU_{2,t}-QT_2^{-1}SU_{1,t}\\
QU_3=-QT_1T_2^{-1}T_1SU_{2,t}+QT_1T_2^{-1}SU_{1,t}
\end{cases}
\end{equation}
or
\begin{equation}\label{B_K-1}
\Gamma_1 f =\begin{pmatrix}
QT_2^{-1}T_1S & -QT_2^{-1}S \\
-QT_1T_2^{-1}T_1S & QT_1T_2^{-1}S
\end{pmatrix}\Gamma_0 f.
\end{equation}
Thus, we arrive at the representation $\Gamma_1 f = B_K\Gamma_0 f,$ and the equality \eqref{B_K} is proved.
\end{proof}

\begin{theorem}\label{s_a_B_K}
The matrix $B_K$ is self-adjoint, i.e., $B_K=B^*_K.$
\end{theorem}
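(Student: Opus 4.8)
The cleanest route is the abstract one, and I would present it first. By Theorem~\ref{A_K}(ii) the Krein extension is $\widehat{A}_K=A^*\upharpoonright\ker(\Gamma_1-B_K\Gamma_0)$, and $\widehat{A}_K$ is by construction a (non-negative) self-adjoint operator; hence Proposition~\ref{A_0}(v) applied with $B=B_K$ gives $B_K=B_K^*$ at once. Concretely this is just the Green identity~\eqref{II.1.2_green_f}: for $f,g\in\dom\widehat{A}_K$ its left-hand side vanishes, so $(B_K\Gamma_0 f,\Gamma_0 g)_\cH=(\Gamma_0 f,B_K\Gamma_0 g)_\cH$, and since $\Gamma$ is surjective the map $\Gamma_0$ carries $\dom\widehat{A}_K$ onto $\cH=\C^{2n}$, forcing $B_K=B_K^*$.

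Since, however, $B_K$ is given explicitly by~\eqref{B_K}, it is worth confirming the formula by a direct check, and here the plan is as follows. All entries of $T_1,T_2,Q,S$ are real, so $B_K^*=B_K^{\top}$ and, writing $B_K=\begin{pmatrix}P&R\\V&W\end{pmatrix}$ with $P=QT_2^{-1}T_1S$, $W=QT_1T_2^{-1}S$, $R=-QT_2^{-1}S$, $V=-QT_1T_2^{-1}T_1S$, self-adjointness amounts to the three identities $P=P^{\top}$, $W=W^{\top}$, $V=R^{\top}$. I would first record two structural symmetries of the lower-triangular Toeplitz block matrix $T=\begin{pmatrix}T_1&\mathbb{O}\\T_2&T_1\end{pmatrix}$ appearing in~\eqref{dom_A_K-1}. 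Writing $T=e^{(b-a)\widehat{L}}$ with $\widehat{L}$ the $2n\times 2n$ lower shift, conjugation by the $2n$-dimensional flip $\widehat{S}$ gives the persymmetry $\widehat{S}T\widehat{S}=T^{\top}$, i.e. $ST_1S=T_1^{\top}$ and $ST_2S=T_2^{\top}$, while conjugation by the checkerboard sign matrix $\widehat{Q}=\diag((-1)^j)$ gives $\widehat{Q}\widehat{L}\widehat{Q}=-\widehat{L}$, hence $\widehat{Q}T\widehat{Q}=T^{-1}$. Reading off the blocks of this last identity yields $QT_1Q=T_1^{-1}$ (equivalently $T_1QT_1=Q$) and $QT_2Q=(-1)^{n+1}T_1^{-1}T_2T_1^{-1}$, the inverse of the latter being the relation $QT_2^{-1}Q=(-1)^{n+1}T_1T_2^{-1}T_1$ used below; I would also note the elementary $QS=(-1)^{n+1}SQ$.

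With these in hand each of the three identities collapses by routine manipulation. For instance $(QT_2^{-1}T_1S)^{\top}=ST_1^{\top}T_2^{-\top}Q=T_1T_2^{-1}SQ$ by the persymmetry relations, while substituting $QT_2^{-1}=(-1)^{n+1}T_1T_2^{-1}T_1Q$ into $QT_2^{-1}T_1S$ and then applying $T_1QT_1=Q$ and $(-1)^{n+1}QS=SQ$ turns it into exactly $T_1T_2^{-1}SQ$, so $P=P^{\top}$; the identities $W=W^{\top}$ and $V=R^{\top}$ follow the same pattern with the same substitutions. The only non-routine point, and the one I expect to be the main obstacle, is establishing the off-diagonal relation $QT_2Q=(-1)^{n+1}T_1^{-1}T_2T_1^{-1}$, which hinges on recognizing that the $(2,1)$-block of $T^{-1}$ equals $-T_1^{-1}T_2T_1^{-1}$ and that $\widehat{Q}$-conjugation restores precisely the sign $(-1)^{n+1}$ (equivalently, that $Q$ and $\widehat{Q}$ induce the same conjugation on the $n\times n$ blocks). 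Once this single relation is secured, all three block computations are mechanical.
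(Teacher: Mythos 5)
Your proposal is correct, and its first half coincides with the paper's own opening remark: the paper likewise begins by observing that $B_K=B_K^*$ follows ``obviously'' from Proposition~\ref{A_0}~(v), since $\widehat{A}_K$ is self-adjoint (your Green-identity elaboration of why this works, including the observation that surjectivity of $\Gamma$ makes $\Gamma_0$ map $\dom\widehat{A}_K$ onto $\mathbb{C}^{2n}$, is accurate). Where you genuinely diverge is in the direct verification. The paper reduces, exactly as you do, to the three block identities \eqref{eq1}--\eqref{eq3}, but then argues entry-wise: it sets $V=ST_2$, whose entries are $v_{j,k}=(b-a)^{j+k-1}/(j+k-1)!$, computes the entries $\varphi_{j,k}$ of $T_1^{-1*}V$ explicitly, and establishes the required symmetries from the vanishing of the alternating binomial sums $\sum_{l=0}^{j+k-1}(-1)^l\binom{j+k-1}{l}=0$. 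You instead exploit the representation $T=e^{(b-a)\widehat{L}}$: persymmetry $\widehat{S}T\widehat{S}=T^{\top}$ and sign conjugation $\widehat{Q}T\widehat{Q}=T^{-1}$, read off block-wise, give $ST_iS=T_i^{\top}$, $QT_1Q=T_1^{-1}$ (equivalently $T_1QT_1=Q$), $QT_2Q=(-1)^{n+1}T_1^{-1}T_2T_1^{-1}$, and $QS=(-1)^{n+1}SQ$. I checked these relations: the block-inverse formula for $T^{-1}$, the fact that conjugation by $Q$ agrees with conjugation by the diagonal blocks of $\widehat{Q}$ (sign flips cancel under conjugation), and the cancellation of the $(-1)^{n+1}$ factors in pairs in each of the three block computations all work out, so your argument is complete modulo the routine substitutions you indicate. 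The comparison: your route buys conceptual transparency --- the symmetry of $B_K$ is traced to symmetries of the exponential of the shift matrix, no binomial identities are needed, and the origin of the sign pattern is visible --- whereas the paper's computation is more elementary and self-contained, staying at the level of explicit Toeplitz entries.
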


\begin{proof}
Obviously, $B_K$ is self-adjoint in accordance with Proposition~\ref{A_0} (v). Let us prove
this fact directly. It is necessary to show that the following equalities hold:
\begin{align}
QT_2^{-1}T_1S=\left(QT_2^{-1}T_1S\right)^*,\label{eq1}\\
QT_1T_2^{-1}S=\left(QT_1T_2^{-1}S\right)^*,\label{eq2}\\
QT_2^{-1}S=\left(QT_1T_2^{-1}T_1S\right)^*.\label{eq3}
\end{align}
Denote $V=ST_2.$ Let us prove the equality~\eqref{eq1}. We start with the following
obvious relation:
\begin{equation}\label{eq1a}
QT_2^{-1}T_1S=QT_2^{-1}SST_1S=QV^{-1}T_1^*.
\end{equation}
Let us check that inverse matrix $T_1^{-1*}VQ$ is self-adjoint.

We will numerate matrix entries of $V$ starting from its right low corner ($j$ is the number of a column and $k$ is the number of a row):
$v_{j,k}=\frac{(b-a)^{j+k-1}}{(j+k-1)!}.$

The entry $T_1^{-1*}V$ (denoted by $\varphi_{j,k}$) has the following form:
\begin{equation}\label{phi}
\varphi_{j,k}=\sum_{l=0}^{k-1}\frac{(a-b)^{l}}{l!}v_{j,k-l}=(b-a)^{j+k-1}\sum_{l=0}^{k-1}(-1)^{l}\frac{1}{l!(j+k-l-1)!}.
\end{equation}
The symmetric one is
\begin{equation}\label{phi-2}
\varphi_{k,j}=(b-a)^{j+k-1}\sum_{m=0}^{j-1}(-1)^{m}\frac{1}{m!(j+k-m-1)!}.
\end{equation}
Substituting $l=j+k-m-1$ we get
\begin{equation}\label{phi-2-1}
\varphi_{k,j}=(b-a)^{j+k-1}\sum_{l=k}^{j+k-1}(-1)^{j+k-l-1}\frac{1}{l!(j+k-l-1)!}.
\end{equation}
Now we multiply the matrix $T_1^{-1*}V$ from the right by $Q$. This means that odd columns are multiplied by $-1$.
To finish the proof of the self-adjointness of $T_1^{-1*}VQ$, one must show
that $\varphi_{j,k}-(-1)^{j+k}\varphi_{k,j}=0.$ We have
\begin{align}
&\frac{\varphi_{j,k}-(-1)^{j+k}\varphi_{k,j}}{(b-a)^{j+k-1}}=\sum_{l=0}^{k-1}(-1)^{l}\frac{1}{l!(j+k-l-1)!}-
\sum_{l=k}^{j+k-1}(-1)^{j+k-l-1}\frac{1}{l!(j+k-l-1)!}
\nonumber\\
&=\sum_{l=0}^{j+k-1}\frac{(-1)^l}{l!(j+k-l-1)!}=\frac{1}{(j+k-1)!}\sum_{l=0}^{j+k-1}(-1)^l\begin{pmatrix}
j+k-1 \\
l
\end{pmatrix}=0.\label{final}
\end{align}
The equality \eqref{eq1} is proved.

The equality \eqref{eq2} is implied by both~\eqref{eq1} and the following relations:
\begin{equation}\label{eq2_final}
QT_1T_2^{-1}S=QT_1V^{-1}, \quad VT_1^{-1}Q=Q\left(T_1^{-1*}VQ\right)^*Q.
\end{equation}

Now let us prove the equality \eqref{eq3}. Passing to inverse matrices in \eqref{eq3} and
taking into account the relations $V=ST_2, \quad V=V^*, \quad ST_1^{-1}=T_1^{-1*}S$ we obtain
\begin{equation}\label{eq3-inv}
VQ=\left(T_1^{-1*}VT_1^{-1}Q\right)^*=QT_1^{-1*}VT_1^{-1}.
\end{equation}
Multiplying the second equality in \eqref{eq3-inv} from the right by $T_1^{-1}$ we get
\begin{equation}\label{eq3-inv-2}
VQT_1^{-1}=QT_1^{-1*}V.
\end{equation}
Now let us prove \eqref{eq3-inv-2}. The entry $VQ$ has the form $\frac{(-1)^{k}}{(j+k-1)!}(b-a)^{j+k-1}.$
Therefore, the entry $VQT_1^{-1}$ equals
\begin{equation}\label{psy}
\psi_{j,k}=(b-a)^{j+k-1}\sum_{m=0}^{j-1}\frac{(-1)^{j+m}}{m!(j+k-m-1)!}=(b-a)^{j+k-1}(-1)^{k}\sum_{l=k}^{j+k-1}\frac{(-1)^{l+1}}{l!(j+k-l-1)!}.
\end{equation}
To calculate entries of $QT_1^{-1*}V$, we must multiply the matrix $T_1^{-1*}V$
from the left by $Q$. This means that odd rows are multiplied by $-1.$ Then, in accordance with~\eqref{phi}, the entry of the matrix $QT_1^{-1*}V$ is
\begin{equation}\label{final-1}
\mu_{j,k}=(b-a)^{j+k-1}(-1)^{k}\sum_{l=0}^{k-1}\frac{(-1)^{l}}{l!(j+k-l-1)!}.
\end{equation}
It is easily seen that $\psi_{j,k}=\mu_{j,k}$ (similarly to \eqref{final}). Equalities \eqref{eq3-inv-2} and \eqref{eq3} are established, and Theorem is completely proved
directly.
\end{proof}

\begin{proposition}\label{cor_M(0)}
Let $\Pi=\left\{\mathcal{H}, \Gamma_0, \Gamma_1\right\}$ be the boundary triplet for $A^*$
defined by \eqref{bt}, and let $M(\cdot)$ be the corresponding Weyl function. Then $B_K=M(0)=B_K^*.$
\end{proposition}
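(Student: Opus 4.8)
The plan is to read off the equality $B_K=M(0)$ from the abstract description of Krein's extension through the Weyl function in Proposition~\ref{A_0}(ii), which guarantees $\widehat{A}_K=A^*\upharpoonright\ker(\Gamma_1-M(0)\Gamma_0)$ as soon as the strong resolvent limit $M(0)$ is a genuine bounded matrix, and then to match this with the representation $\widehat{A}_K=A^*\upharpoonright\ker(\Gamma_1-B_K\Gamma_0)$ already established in Theorem~\ref{A_K}(ii). The self-adjointness $B_K=B_K^*$ is exactly Theorem~\ref{s_a_B_K}, so the only real task is the identification $B_K=M(0)$.

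First I would verify the standing hypothesis $A_0:=A^*\upharpoonright\ker\Gamma_0\geq 0$ of Proposition~\ref{A_0}. For the triplet~\eqref{bt} one has $\ker\Gamma_0=\{f : f^{(k)}(a)=f^{(k)}(b)=0,\ k=0,\dots,n-1\}$, so $A_0$ is the Dirichlet realization, that is, Friedrichs' extension~\eqref{Krei-3}; integration by parts gives $(A_0 f,f)=\int_a^b|f^{(n)}|^2\,dx\geq 0$, whence $A_0=\widehat{A}_F\geq 0$.

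Next I would establish the finiteness criterion $\dom A_0\cap\dom\widehat{A}_K=\dom A$ appearing in Proposition~\ref{A_0}(ii). The shortest route uses Theorem~\ref{A_K}(ii): if $f\in\dom\widehat{A}_K\cap\ker\Gamma_0$, then $\Gamma_0 f=0$ and hence $\Gamma_1 f=B_K\Gamma_0 f=0$ (here it is decisive that $B_K$ is a bona fide $2n\times 2n$ matrix, not an unbounded object), so $\Gamma f=0$ and $f\in\ker\Gamma=\dom A$; the reverse inclusion is immediate since $A\subset A_0$ and $A\subset\widehat{A}_K$. Alternatively one could invoke Theorem~\ref{Kr-2}, writing $\dom\widehat{A}_K=\dom A\dotplus\ker A^*$ with $\ker A^*=\Span\{1,x,\dots,x^{2n-1}\}$ and noting that a polynomial of degree at most $2n-1$ vanishing to order $n$ at both endpoints is identically zero. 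Proposition~\ref{A_0}(ii) then yields $M(0)\in\mathbb{C}^{2n\times 2n}$ together with $\widehat{A}_K=A^*\upharpoonright\ker(\Gamma_1-M(0)\Gamma_0)$.

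Finally I would equate the two boundary operators using surjectivity of the map $\Gamma$. Given an arbitrary $h\in\mathcal{H}$, choose $f\in\dom A^*$ with $\Gamma_0 f=h$ and $\Gamma_1 f=B_K h$; by Theorem~\ref{A_K}(ii) this $f$ lies in $\dom\widehat{A}_K$, so the $M(0)$-description forces $M(0)h=M(0)\Gamma_0 f=\Gamma_1 f=B_K h$. Since $h$ is arbitrary, $B_K=M(0)$, and combined with Theorem~\ref{s_a_B_K} this gives $B_K=M(0)=B_K^*$. The main obstacle throughout is confirming that $M(0)$ is a finite matrix; once that is secured, the passage from equality of domains to equality of matrices is a routine consequence of the surjectivity of the boundary map.
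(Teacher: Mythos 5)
Your proposal is correct and follows essentially the same route as the paper, whose entire proof reads ``Combining Proposition~\ref{A_0}~(ii) with Theorem~\ref{A_K}~(ii) we arrive at the desired result.'' You simply make explicit the steps the paper leaves implicit --- checking that $A_0$ is the non-negative Dirichlet (Friedrichs) realization, verifying $\dom A_0\cap\dom\widehat{A}_K=\dom A$ so that $M(0)$ is a finite matrix, and using surjectivity of $\Gamma$ to pass from equality of extensions to equality of the boundary matrices --- all of which is accurate and a welcome amplification of the paper's one-line argument.
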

\begin{proof}
Combining Proposition \ref{A_0} (ii) with Theorem \ref{A_K} (ii) we arrive at the desired result.
\end{proof}

In the following theorem we describe all non-negative extensions of the operator $A$ as well as extensions having exactly $\kappa$ negative squares.


\begin{theorem}\label{th.2}
Let $\Pi=\left\{\mathcal{H}, \Gamma_0, \Gamma_1\right\}$ be the boundary triplet for operator $A^*$ defined by~\eqref{bt}, and let $B_K$ be the matrix defined by~\eqref{B_K}. Let also matrices $C,D\in\mathbb{C}^{2n\times 2n}$ satisfy the conditions $CD^*=DC^*, \det(CC^*+DD^*)\neq 0,$ and
\begin{equation}\label{A_CD-3}
A_{C,D}=A^*\upharpoonright{\ker}\left(D\Gamma_1-C\Gamma_0\right)=A_{C,D}^*.
\end{equation}
Then:
\item[(i)]
The following equivalence holds:
\begin{equation}\label{kappa_minus}
\kappa_-(A_{C,D})=\kappa \quad \Longleftrightarrow\quad \kappa_-(CD^*-DB_KD^*)=\kappa.
\end{equation}
In particular, $A_{C,D}\geq 0 \Longleftrightarrow CD^*-DB_KD^*\geq 0.$
\item[(ii)]
The operator $A_{C,D}$ is positively definite if and only if
the same holds for the matrix \\$CD^*-DB_KD^*$.
\end{theorem}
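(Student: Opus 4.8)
The plan is to reduce both assertions to Proposition~\ref{A_0}(iv), whose standing hypothesis is that the distinguished extension $A_0=A^*\!\upharpoonright\ker\Gamma_0$ equals the Friedrichs extension $\widehat A_F$. First I would verify this hypothesis for the triplet~\eqref{bt}. The domain $\ker\Gamma_0$ consists exactly of the $f\in W^{2n,2}(a,b)$ with $f^{(k)}(a)=f^{(k)}(b)=0$ for $k=0,\dots,n-1$, and for the constant-coefficient expression~\eqref{Op. A} the quasi-derivatives reduce to ordinary ones, $f^{[k]}=f^{(k)}$ for $k\le n-1$; hence $\ker\Gamma_0$ is precisely the Dirichlet realization~\eqref{Krei-3}, so $A_0=\widehat A_F$. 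On the finite interval this operator is positively definite (the null equation $f^{(2n)}=0$ forces $f$ to be a polynomial of degree $\le 2n-1$, and the $2n$ Dirichlet conditions kill it), so $0\in\rho(A_0)$, the Weyl function has a genuine value at the origin, and $M(0)=B_K=B_K^*$ by Proposition~\ref{cor_M(0)}.

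Granting these two identifications, part~(i) is immediate: Proposition~\ref{A_0}(iv) applied to $A_{C,D}$ gives $\kappa_-(A_{C,D})=\kappa_-(CD^*-DM(0)D^*)=\kappa_-(CD^*-DB_KD^*)$, which is exactly~\eqref{kappa_minus}, the semidefinite case being $\kappa=0$. This step is routine.

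For part~(ii) the device is a spectral shift. On the finite interval $A_{C,D}$ has discrete spectrum, so it is positively definite iff $A_{C,D}\ge\varepsilon I$ for some $\varepsilon>0$. Let $c>0$ be the lower bound of the minimal operator, $(Af,f)=\|f^{(n)}\|^2\ge c\|f\|^2$. For $0<\varepsilon<c$ the operator $A-\varepsilon I$ is non-negative, keeps the same triplet with distinguished extension $\widehat A_F-\varepsilon I$ and Weyl function $M(\cdot+\varepsilon)$, so its boundary value at $0$ is $M(\varepsilon)$. Thus Proposition~\ref{A_0}(iv) for $A-\varepsilon I$ yields $\kappa_-(A_{C,D}-\varepsilon I)=\kappa_-\bigl(CD^*-DM(\varepsilon)D^*\bigr)$, whence $A_{C,D}$ is positively definite iff $CD^*-DM(\varepsilon)D^*\ge0$ for some $\varepsilon\in(0,c)$. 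As $M$ is a matrix Nevanlinna function it is nondecreasing and continuous on $(-\infty,\lambda_1)$ with $M(\varepsilon)\to B_K$ as $\varepsilon\downarrow0$; writing $CD^*-DM(\varepsilon)D^*=(CD^*-DB_KD^*)-D\bigl(M(\varepsilon)-B_K\bigr)D^*$ shows that if $CD^*-DB_KD^*$ is positive definite then the left-hand side stays $\ge0$ for small $\varepsilon$, giving the implication ``matrix positive definite $\Rightarrow$ operator positively definite''.

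The hard part is the converse, i.e.\ recovering \emph{strict} positivity of $CD^*-DB_KD^*$ rather than just $CD^*-DB_KD^*\ge0$ obtained in the limit $\varepsilon\downarrow0$. What must be controlled is the kernel, and I would compute it through the Weyl function at zero: since $\Gamma_0\!\upharpoonright\cN_0$ is a bijection onto $\mathbb C^{2n}$ (because $0\in\rho(A_0)$) and $\Gamma_1 f=B_K\Gamma_0 f$ on $\cN_0=\ker A^*$, an element $f\in\ker A_{C,D}$ corresponds bijectively to a solution of $(C-DB_K)\Gamma_0 f=0$, so $\dim\ker A_{C,D}=\dim\ker(C-DB_K)$. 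The task is then to match this with the kernel of $CD^*-DB_KD^*=(C-DB_K)D^*$; this identification is transparent for extensions transversal to $A_0$ (equivalently $\det D\neq0$, i.e.\ $A_{C,D}=A^*\!\upharpoonright\ker(\Gamma_1-B\Gamma_0)$ with $B=D^{-1}C$), and isolating and justifying the passage between the two kernels is where the essential care in the argument is required.
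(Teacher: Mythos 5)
Your part (i) is correct and is essentially the paper's own argument: the paper's entire proof reads, in effect, ``$B_K=M(0)$ by Proposition~\ref{cor_M(0)}, now apply Proposition~\ref{A_0}(iv).'' Your preliminary verification that $A_0=A^*\upharpoonright\ker\Gamma_0$ is the Dirichlet realization \eqref{Krei-3}, hence coincides with $\widehat{A}_F$ and is positively definite (so that $0\in\rho(A_0)$ and $M(0)$ is a genuine value), is left completely implicit in the paper; making it explicit is an improvement, since $A_0=\widehat{A}_F$ is the standing hypothesis of Proposition~\ref{A_0}(iv).

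For part (ii) you go beyond the paper, and rightly so: the paper offers no separate argument for positive definiteness, even though Proposition~\ref{A_0}(iv) speaks only of $\kappa_-$ and of non-negativity. Your shift argument for the implication ``$CD^*-DB_KD^*$ positive definite $\Rightarrow A_{C,D}$ positively definite'' is sound. But the converse, which you single out as the hard part and leave open, is not merely delicate --- it is \emph{false} as the theorem is stated. Take $C=I$, $D=0$: these satisfy $CD^*=DC^*$ and $\det(CC^*+DD^*)\neq 0$, and $A_{C,D}=A_0=\widehat{A}_F$ is positively definite, yet $CD^*-DB_KD^*=0$ is not a positive definite matrix. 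More generally, whenever $\det D=0$ the matrix $CD^*-DB_KD^*=(C-DB_K)D^*$ has rank at most $\Rank D<2n$ and can never be positive definite, while $A_{C,D}$ may well be. So the obstruction you located (matching $\ker(C-DB_K)$ with $\ker\bigl((C-DB_K)D^*\bigr)$) is a defect of statement (ii) itself, not a missing trick: (ii) is true only under the additional transversality assumption $\det D\neq 0$. Under that assumption your sketch closes at once: $B:=D^{-1}C$ is self-adjoint, $CD^*-DB_KD^*=D(B-B_K)D^*$ is congruent to $B-B_K$, your kernel computation gives $\dim\ker A_{C,D}=\dim\ker(B-B_K)$ (using that $\Gamma_0\upharpoonright\cN_0$ is a bijection onto $\mathbb{C}^{2n}$ because $0\in\rho(A_0)$, and that $\Gamma_1f=B_K\Gamma_0f$ on $\cN_0$ by Theorems~\ref{Kr-2} and~\ref{A_K}), and combining this with part (i) and discreteness of the spectrum yields (ii). In short: your (i) is complete and matches the paper; your (ii) is complete in one direction, and the direction you could not finish cannot be finished without restricting to $\det D\neq 0$ --- a restriction which the paper's one-line proof silently requires as well.
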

\begin{proof}
Due to Proposition \ref{cor_M(0)}, one has $B_K=M(0).$ To complete the proof, it suffices to use  Proposition~\ref{A_0} (iv).
\end{proof}
%

\section{Examples}

To facilitate the reading, let us provide four examples for $a=0, b=1$ and $n\in\{1,2,3,4\}.$
\begin{example}\label{example_1}
Let $n=1$, i.e., $Ay=-y''.$ Then
\begin{equation}\label{T-1}
T=\begin{pmatrix}
1 & 0 \\
1 & 1
\end{pmatrix},
\end{equation}
 and the boundary conditions from \eqref{dom_A_K} take the form:
\begin{equation}\label{bk-1}
\begin{cases}
f'(1)=f'(0) \\
f(1)=f'(0)+f(0)
\end{cases}.
\end{equation}
It follows from \eqref{TCS} and \eqref{B_K} that
\begin{equation}\label{matrices-1}
T_1=(1), \quad T_2=(1), \quad Q=(-1), \quad S=(1),
\end{equation}
and
\begin{equation}\label{B_K-ex-1}
B_K=\begin{pmatrix}
-1 & 1 \\
1 & -1
\end{pmatrix}
\end{equation}
is symmetric as required.
\end{example}

\begin{example}\label{example-2}
Let $n=2$, i.e., $Ay=y^{(iv)}.$ Then
\begin{equation}\label{T-2}
T=\begin{pmatrix}
1 & 0 & 0 & 0 \\
1 & 1 & 0 & 0 \\
\frac{1}{2} & 1 & 1 & 0 \\
\frac{1}{6} & \frac{1}{2} & 1 & 1
\end{pmatrix},
\end{equation}
and the boundary conditions from \eqref{dom_A_K} take the form:
\begin{equation}\label{bk-2}
\begin{cases}
f'''(1)=f'''(0) \\
f''(1)=f'''(0)+f''(0) \\
f'(1)=\frac{1}{2}f'''(0)+f''(0)+f'(0) \\
f(1)=\frac{1}{6}f'''(0)+\frac{1}{2}f''(0)+f'(0)+f(0)
\end{cases}.
\end{equation}
It follows from \eqref{TCS} and \eqref{B_K} that
\begin{equation}\label{matrices-2}
T_1=\begin{pmatrix}
1 & 0 \\
1 & 1
\end{pmatrix}, \quad
T_2=\begin{pmatrix}
\frac{1}{2} & 1 \\
\frac{1}{6} & \frac{1}{2}
\end{pmatrix}, \quad
Q=\begin{pmatrix}
1 & 0 \\
0 & -1
\end{pmatrix}, \quad
S=\begin{pmatrix}
0 & 1 \\
1 & 0
\end{pmatrix},
\end{equation}
and
\begin{equation}\label{B_K-ex-2}
B_K=\begin{pmatrix}
-12 & -6 & 12 & -6 \\
-6 & -4 & 6 & -2 \\
12 & 6 & -12 & 6 \\
-6 & -2 & 6 & -4
\end{pmatrix}.
\end{equation}
\end{example}

\begin{example}\label{example-3}
Let $n=3$, i.e., $Ay=-y^{(vi)}.$ Then
\begin{equation}\label{T-3}
T=\begin{pmatrix}
1 & 0 & 0 & 0 & 0 & 0 \\
1 & 1 & 0 & 0 & 0 & 0 \\
\frac{1}{2} & 1 & 1 & 0 & 0 & 0\\
\frac{1}{6} & \frac{1}{2} & 1 & 1 & 0 & 0 \\
\frac{1}{24} & \frac{1}{6} & \frac{1}{2} & 1 & 1 & 0 \\
\frac{1}{120} & \frac{1}{24} & \frac{1}{6} & \frac{1}{2} & 1 & 1
\end{pmatrix},
\end{equation}
and the boundary conditions are the following:
\begin{equation}\label{bk-3}
\begin{cases}
f^{(v)}(1)=f^{(v)}(0) \\
f^{(iv)}(1)=f^{(v)}(0)+f^{(iv)}(0) \\
f'''(1)=\frac{1}{2}f^{(v)}(0)+f^{(iv)}(0)+f'''(0) \\
f''(1)=\frac{1}{6}f^{(v)}(0)+\frac{1}{2}f^{(iv)}(0)+f'''(0)+f''(0) \\
f'(1)=\frac{1}{24}f^{(v)}(0)+\frac{1}{6}f^{(iv)}(0)+\frac{1}{2}f'''(0)+f''(0)+f'(0) \\
f(1)=\frac{1}{120}f^{(v)}(0)+\frac{1}{24}f^{(iv)}(0)+\frac{1}{6}f'''(0)+\frac{1}{2}f''(0)+f'(0)+f(0)
\end{cases}.
\end{equation}
Both \eqref{TCS} and \eqref{B_K} imply that
\begin{equation}\label{matrices-3}
T_1=\begin{pmatrix}
1 & 0 & 0 \\
1 & 1 & 0 \\
\frac{1}{2} & 1 & 1
\end{pmatrix}, \quad
T_2=\begin{pmatrix}
\frac{1}{6} & \frac{1}{2} & 1 \\
\frac{1}{24} & \frac{1}{6} & \frac{1}{2} \\
\frac{1}{120} & \frac{1}{24} & \frac{1}{6}
\end{pmatrix}, \quad
Q=\begin{pmatrix}
-1 & 0 & 0 \\
0 & 1 & 0 \\
0 & 0 & -1
\end{pmatrix}, \quad
S=\begin{pmatrix}
0 & 0 & 1 \\
0 & 1 & 0 \\
1 & 0 & 0
\end{pmatrix},
\end{equation}
and
\begin{equation}\label{B_K-ex-3}
B_K=\begin{pmatrix}
-720 & -360 & -60 & 720 & -360 & 60 \\
-360 & -192 & -36 & 360 & -168 & 24 \\
-60 & -36 & -9 & 60 & -24 & 3 \\
720 & 360 & 60 & -720 & 360 & -60 \\
-360 & -168 & -24 & 360 & -192 & 36 \\
60 & 24 & 3 & -60 & 36 & -9
\end{pmatrix}.
\end{equation}
\end{example}

\begin{example}\label{example-4}
Let $n=4$, i.e., $Ay=y^{(viii)}.$ Then
\begin{equation}\label{T-4}
T=\begin{pmatrix}
1 & 0 & 0 & 0 & 0 & 0 & 0 & 0 \\
1 & 1 & 0 & 0 & 0 & 0 & 0 & 0 \\
\frac{1}{2} & 1 & 1 & 0 & 0 & 0 & 0 & 0\\
\frac{1}{6} & \frac{1}{2} & 1 & 1 & 0 & 0 & 0 & 0 \\
\frac{1}{24} & \frac{1}{6} & \frac{1}{2} & 1 & 1 & 0 & 0 & 0 \\
\frac{1}{120} & \frac{1}{24} & \frac{1}{6} & \frac{1}{2} & 1 & 1 & 0 & 0 \\
\frac{1}{720} & \frac{1}{120} & \frac{1}{24} & \frac{1}{6} & \frac{1}{2} & 1 & 1 & 0 \\
\frac{1}{5040} & \frac{1}{720} & \frac{1}{120} & \frac{1}{24} & \frac{1}{6} & \frac{1}{2} & 1 & 1
\end{pmatrix},
\end{equation}
and the boundary conditions are the following:
\begin{equation}\label{bk-4}
\begin{cases}
f^{(vii)}(1)=f^{(vii)}(0) \\
f^{(vi)}(1)=f^{(vii)}(0)+f^{(vi)}(0) \\
f^{(v)}(1)=\frac{1}{2}f^{(vii)}(0)+f^{(vi)}(0)+f^{(v)}(0) \\
f^{(iv)}(1)=\frac{1}{6}f^{(vii)}(0)+\frac{1}{2}f^{(vi)}(0)+f^{(v)}(0)+f^{(iv)}(0) \\
f'''(1)=\frac{1}{24}f^{(vii)}(0)+\frac{1}{6}f^{(vi)}(0)+\frac{1}{2}f^{(v)}(0)+f^{(iv)}(0)+f'''(0) \\
f''(1)=\frac{1}{120}f^{(vii)}(0)+\frac{1}{24}f^{(vi)}(0)+\frac{1}{6}f^{(v)}(0)+\frac{1}{2}f^{(iv)}(0)+f'''(0)+f''(0) \\
f'(1)=\frac{1}{720}f^{(vii)}(0)+\frac{1}{120}f^{(vi)}(0)+\frac{1}{24}f^{(v)}(0)+\frac{1}{6}f^{(iv)}(0)+\frac{1}{2}f'''(0)+f''(0)+f'(0) \\
f(1)=\frac{1}{5040}f^{(vii)}(0)+\frac{1}{720}f^{(vi)}(0)+\frac{1}{120}f^{(v)}(0)+\frac{1}{24}f^{(iv)}(0)+\frac{1}{6}f'''(0)+\frac{1}{2}f''(0)+f'(0)+f(0)
\end{cases}.
\end{equation}
Both \eqref{TCS} and \eqref{B_K} imply that
\begin{equation}\label{matrices-4}
\begin{gathered}
T_1=\begin{pmatrix}
1 & 0 & 0 & 0 \\
1 & 1 & 0 & 0 \\
\frac{1}{2} & 1 & 1 & 0 \\
\frac{1}{6} & \frac{1}{2} & 1 & 1
\end{pmatrix}, \quad
T_2=\begin{pmatrix}
\frac{1}{24} & \frac{1}{6} & \frac{1}{2} & 1 \\
\frac{1}{120} & \frac{1}{24} & \frac{1}{6} & \frac{1}{2} \\
\frac{1}{720} & \frac{1}{120} & \frac{1}{24} & \frac{1}{6} \\
\frac{1}{5040} & \frac{1}{720} & \frac{1}{120} & \frac{1}{24}
\end{pmatrix}, \\
Q=\begin{pmatrix}
1 & 0 & 0 & 0 \\
0 & -1 & 0 & 0 \\
0 & 0 & 1 & 0 \\
0 & 0 & 0 & -1
\end{pmatrix}, \quad
S=\begin{pmatrix}
0 & 0 & 0 & 1 \\
0 & 0 & 1 & 0 \\
0 & 1 & 0 & 0 \\
1 & 0 & 0 & 0
\end{pmatrix},
\end{gathered}
\end{equation}
and
\begin{equation}\label{B_K-ex-4}
B_K=\begin{pmatrix}
-100800 & -50400 & -10080 & -840 & 100800 & -50400 & 10080 & -840 \\
-50400 & -25920 & -5400 & -480 & 50400 & -24480 & 4680 & -360 \\
-10080 & -5400 & -1200 & -120 & 10080 & -4680 & 840 & -60 \\
-840 & -480 & -120 & -16 & 840 & -360 & 60 & -4 \\
100800 & 50400 & 10080 & 840 & -100800 & 50400 & -10080 & 840 \\
-50400 & -24480 & -4680 & -360 & 50400 & -25920 & 5400 & -480 \\
10080 & 4680 & 840 & 60 & -10080 & 5400 & -1200 & 120 \\
-840 & -360 & -60 & -4 & 840 & -480 & 120 & -16
\end{pmatrix}.
\end{equation}
\end{example}

The authors express their gratitude to Prof. M. Malamud for posing the problem and permanent attention to the work, and to A. Ananieva and F. Gesztesy for useful discussions.

{\bf Yaroslav Igorovych Granovskyi}

Institute of Applied Mathematics and Mechanics, NAS of Ukraine

E-Mail: {\tt yarvodoley@mail.ru}

{\bf Leonid Leonidovych Oridoroga}

Institute of Applied Mathematics and Mechanics, NAS of Ukraine

\end{document}